\numberwithin{equation}{section}
\newcounter{stepctr}
{\end{list}}
\newtheorem{thm}{Theorem}[section]
\newtheorem{prop}[thm]{Proposition}
\newtheorem{cor}[thm]{Corollary}
\theoremstyle{definition}
\newtheorem{dfn}[thm]{Definition}
\newtheorem{ex}[thm]{Example}
\newtheorem{rema}[thm]{Remark}
\newtheorem{prob*}{ Problem}
 \newtheorem{lem}[thm]{Lemma}
\newcommand{\demo}{\begin{proof}}
\newcommand{\N}{\mathbb{N}}
\newcommand{\NN}{\mathcal{N}}
\newcommand{\C}{\mathbb{C}}
\def\fx{{\mathcal F}(X) }
\def\ll^2{{\mathcal L}(\ell^2(\N)}
\def\rx{{\mathcal R}(X) }
\def\f^0x{{\mathcal F^0}(X) }
\def\qx{{\mathcal Q}(X) }
\def\kx{{\mathcal K}(X) }
\def\nx{{\mathcal N}(X) }
\title
{\bf  On   the  property  $(Z_{E_{a}})$    }
\author{    H. Zariouh }
\date{ }
\begin{document}

\maketitle \thispagestyle{empty}

\begin{abstract}\noindent\baselineskip=15pt
The paper introduces the notion of properties $(Z_{\Pi_{a}})$ and  $(Z_{E_{a}})$  as   variants of Weyl's theorem and Browder's theorem for
bounded linear operators acting  on  infinite dimensional Banach spaces. A characterization of these properties  in
terms of localized  single valued  extension property is
 given, and
 the perturbation   by commuting Riesz operators is also studied. Classes of operators are considered
as illustrating examples.

\end{abstract}

\baselineskip=15pt
 \footnotetext{\small \noindent  2010 AMS subject
classification: Primary 47A53, 47A55, 47A10, 47A11. \\
\noindent Keywords :  Property
$(Z_{E_{a}}),$ SVEP, Weyl's theorem, Riesz operator.}\baselineskip=15pt

\section{Introduction}

In 1909 H.Weyl \cite{W} examined the spectra of all compact perturbation of
 a self-adjoint operator on a Hilbert space and found that their intersection consisted precisely of
  those points of the spectrum which were not isolated eigenvalues of finite multiplicity. Today this
   classical result may be stated by saying that the spectral points of a self- adjoint operator which
   do not belong to Weyl spectrum are precisely the eigenvalues of finite multiplicity which are isolated points
    of the spectrum. This Weyl's theorem has been extended from self- adjoint operators to several other classes of
    operators and many new variants
 have been obtained by many researchers (\cite{BE-Kachad}, \cite{BK}, \cite{BZ3}, \cite{GK},  \cite{KK},  \cite{Ra}).

 This paper is a continuation of our recent investigations in the subject
of Weyl type theorems. We introduce and study the new variants of Weyl's theorem and Browder's theorem. The essential results
obtained are summarized in the diagram presented in the  end of the second section of this
paper. For further definitions and symbols we also refer the reader
to \cite{BK}, \cite{BZ3} and  \cite{ZZ}.

We begin with some standard notations of Fredholm theory. Throughout this paper let $\mathcal{B}(X)$ denote the algebra of
all bounded linear operators on an infinite-dimensional complex Banach space $X.$ For an operator $T\in \mathcal{B}(X),$
we denote by $T^*,$  $\sigma(T),$ $\NN(T)$ and $\mathcal{R}(T)$ the dual of $T,$ the spectrum of $T,$
 the null space of $T$ and  the  range space of $T,$ respectively. If $\dim\NN(T)<\infty$ and  $\dim\NN(T^*)<\infty$, then $T$ is
 called a \emph{Fredholm} operator and its index is defined by $\mbox{ind}(T)=\dim\NN(T)-\dim\NN(T^*).$
A \emph{Weyl} operator is a Fredholm operator of index $0$ and the Weyl spectrum is defined by  $\sigma_{W}(T)=\{\lambda\in\mathbb{C}:T-\lambda I \mbox{
is not a Weyl operator}\}.$

For a bounded linear operator $T$ and $n\in\N,$
 let $T_{[n]}: \mathcal{R}(T^n)\rightarrow \mathcal{R}(T^n)$  be the restriction of $T$ to  $\mathcal{R}(T^n).$
  $T \in L(X)$ is said to be \emph{B-Weyl} if for some integer $n \geq 0$ the range $\mathcal{R}(T^n)$
is closed and $T_{[n]}$ is Weyl;  its index is defined as the index of the Weyl  operator $T_{[n]}.$
The respective \emph{B-Weyl spectrum} is defined by $\sigma_{BW}(T)=\{\lambda\in\mathbb{C}:T-\lambda I \mbox{
is not a B-Weyl operator}\},$ see \cite{BE1}.

The  \textit{ascent} $a(T)$ of  an operator $T$ is defined by
 $a(T)=\mbox{inf} \{ n\in \mathbb{N}: \NN(T^n)=\NN(T^{n+1})\}$,
and the  \textit{descent}   $ \delta(T)$ of $T$ is defined by
 $\delta(T)= \mbox{inf} \{ n \in \mathbb{N}: \mathcal{R}(T^n)= \mathcal{R}(T^{n+1})\},$ with $ \mbox{inf}\, \emptyset= \infty.$ 
 An operator $T \in\mathcal{B}(X)$ is called Browder if it is Fredholm of finite ascent, and
finite descent and the respective Browder spectrum is defined by $\sigma_{b}(T)=\{\lambda \in\mathbb{C}:T-\lambda I \mbox{
is not a Browder operator}\}.$
According to \cite {H}, a complex number $\lambda\in\sigma(T)$ is a
\textit{pole} of the
 resolvent of $T$  if $T-\lambda I$ has  finite ascent and finite
 descent, and in  this case they are equal. We recall \cite{BK} that a  complex number $\lambda\in\sigma_a(T)$
 is a \textit{left pole} of $T$ if $a(T-\lambda I)<\infty$ and $\mathcal{R}(T^{a(T-\lambda I)+1})$ is
 closed.
 In addition, we have
the following usual notations that will be needed later:\\
\noindent  {\bf\emph{{Notations and symbols:}}}
\smallskip

\noindent $\fx$: the ideal of finite rank  operators in $\mathcal{B}(X),$ \\
 $\kx$: the ideal of compact operators in $\mathcal{B}(X),$ \\
 $\nx$: the class of nilpotent operators on $X,$ \\
 $\qx$: the class of quasi-nilpotent operators on $X,$ \\
$\rx$: the class of Riesz  operators acting on  $X,$ \\
\noindent $\mbox{iso}\,A$: isolated points of a  subset $A\subset \mathbb{C},$\\
 \noindent $\mbox{acc}\,A$: accumulations  points of a subset $A\subset \mathbb{C},$\\
\noindent $D(0, 1)$: the closed unit disc in $\mathbb{C},$\\
$C(0, 1)$: the  unit circle of $\mathbb{C},$\\
$\Pi(T)$: poles of $T,$\\
$\Pi^0(T)$: poles of $T$ of finite rank,\\
$\Pi_a(T)$: left  poles of $T,$\\
$\sigma_{p}(T)$:  eigenvalues of $T,$\\
$\sigma_{p}^0(T)$: eigenvalues of $T$ of finite multiplicity,\\
$E^0(T):=\mbox{iso}\,\sigma(T)\cap\sigma_{p}^0(T),$ \\
$E(T):=\mbox{iso}\,\sigma(T)\cap\sigma_{p}(T),$\\
$E_a(T):=\mbox{iso}\,\sigma_a(T)\cap\sigma_{p}(T),$\\
$\sigma_{b}(T)=\sigma(T)\setminus\Pi^0(T)$:  Browder spectrum of $T,$\\
$\sigma_{W}(T)$:  Weyl spectrum of $T,$\\
$\sigma_{BW}(T)$:  B-Weyl spectrum of $T,$\\
the symbol $\bigsqcup$ stands for the disjoint union.

\begin{dfn} \label{dfn0}  \cite{BK}, \cite{harte}, \cite{W} Let $T\in \mathcal{B}(X).$ $T$ is said to satisfy\\
i) Weyl's theorem if
$\sigma(T)\setminus\sigma_{W}(T)=E^0(T);$ ($W$ for brevity).\\
ii) Browder's theorem if  $\sigma(T)\setminus\sigma_{W}(T)=\Pi^0(T);$ ($B$ for brevity).\\
iii) generalized Weyl's theorem if  $\sigma(T)\setminus\sigma_{BW}(T)=E(T);$ ($gW$ for brevity).\\
iv) generalized Browder's theorem if  $\sigma(T)\setminus\sigma_{BW}(T)=\Pi(T);$ ($gB$ for brevity).
\end{dfn}

\begin{dfn}\label{dfn1}\cite{BZ3},\cite{ZZ} Let $T\in \mathcal{B}(X).$ $T$ is said to satisfy\\
i) Property $(gab)$ if  $\sigma(T)\setminus\sigma_{BW}(T)=\Pi_a(T).$\\
ii) Property $(gaw)$ if  $\sigma(T)\setminus\sigma_{BW}(T)=E_a(T).$\\
iii) Property $(ab)$ if $\sigma(T)\setminus\sigma_{W}(T)=\Pi_a^0(T).$\\
iv)  Property $(aw)$ if  $\sigma(T)\setminus\sigma_{W}(T)=E_a^0(T).$\\
v) Property $(Bab)$ if  $\sigma(T)\setminus\sigma_{BW}(T)=\Pi_a^0(T).$\\
vi) Property $(Baw)$ if  $\sigma(T)\setminus\sigma_{BW}(T)=E_a^0(T).$\end{dfn}

The relationship between properties and  theorems given  in the precedent definitions  is
summarized in  the following diagram. (arrows signify implications and numbers near the arrows are
references to the bibliography therein).

 \vspace{5pt}

\vbox{
\[
\begin{CD}@.(Baw)\\
@.@VV\mbox{{\scriptsize\cite{BZ3}}}V\\(gaw)
@>\mbox{{\scriptsize\cite{ZZ}}}>> (aw) @>\mbox{{\scriptsize\cite
{BZ1}}}>>\mbox{$W$}@< \mbox{{\scriptsize\cite
{BK}}}<<\mbox{$gW$} \\
@VV\mbox{{\scriptsize\cite{ZZ}}}V@VV\mbox{{\scriptsize\cite{BZ3}}}V
@VV\mbox{{\scriptsize\cite{Barnes}}}V@VV\mbox{{\scriptsize\cite{BE1}}}V\\
 (gab)
@>>\mbox{{\scriptsize\cite{BZ3}}}> (ab)
@>>\mbox{{\scriptsize\cite{BZ3}}}> \mbox{$B$}& \Longleftrightarrow_{\mbox{{\scriptsize\cite{AmZg1}}}} &\mbox{  $gB$}\\
@.@AA\mbox{{\scriptsize\cite{ZZ}}}A\\
@.(Bab)\\
\end{CD}
\]
}
Moreover,   counterexamples were given to  show that the reverse of each implication in the diagram is not true. Nonetheless,
it was proved  that under some extra assumptions, these implications
 are equivalences.

\section{ Properties $(Z_{\Pi_{a}})$ and $(Z_{E_{a}})$  }

We define the properties $(Z_{\Pi_{a}})$ and $(Z_{E_{a}})$  as follows:

\begin{dfn}A bounded linear operator $T\in \mathcal{B}(X)$ is said to satisfy property
 $(Z_{E_{a}})$ if $\sigma(T)\setminus\sigma_{W}(T)= E_a(T),$ and
is said to satisfy property
 $(Z_{\Pi_{a}})$ if
 $\sigma(T)\setminus\sigma_{W}(T)= \Pi_a(T).$ \end{dfn}

\begin{ex} Hereafter, we denote by $R$ the unilateral right shift
operator defined on the $\ell^2(\N)$ by $R(x_1, x_2, x_3, \ldots)=(0, x_1, x_2, x_3, \ldots).$
\begin{enumerate}
\item  It is well known that $\sigma(R)=D(0, 1),$
$\sigma_{W}(R)=D(0, 1)$ and
$E_a(R)=\Pi_a(R)=\emptyset.$  So $R$ satisfies the property
$(Z_{E_{a}})$ and the property $(Z_{\Pi_{a}}).$
\item  Let $V$ denote the Volterra operator on the
Banach space $C[0, 1]$ defined by $V(f)(x)=\int_0^x f(t)dt \mbox{for all} f\in C[0, 1].$ $V$ is injective\texttt{} and
quasinilpotent. $\sigma(V)=\sigma_{W}(V)=\{0\}$ and $\Pi_a(V)=E_a(V)=\emptyset.$ So $V$  satisfies the properties $(Z_{E_{a}})$ and   $(Z_{\Pi_{a}}).$
\end{enumerate}
 \end{ex}

\begin{lem}\label{lem0} Let  $T\in \mathcal{B}(X).$ If $T$ satisfies property $(Z_{E_{a}}),$ then
\[E_a(T)=E_a^0(T)=\Pi_a^0(T)=\Pi_a(T)=\Pi^0(T)=\Pi(T)=E^0(T)=E(T).\] \end{lem}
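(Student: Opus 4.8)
The plan is to exploit the defining equality $\sigma(T)\setminus\sigma_W(T)=E_a(T)$ together with the general inclusions among the various distinguished parts of the spectrum, and then to squeeze the eight displayed sets between two extremes that coincide. First I would record the easy containments that hold for every operator: $\Pi(T)\subseteq\Pi_a(T)$, $\Pi^0(T)\subseteq\Pi(T)$, $\Pi^0_a(T)\subseteq\Pi_a(T)$, $E^0(T)\subseteq E(T)$, $E^0_a(T)\subseteq E_a(T)$, $\Pi(T)\subseteq E(T)$, $\Pi_a(T)\subseteq E_a(T)$, $\Pi^0(T)\subseteq E^0(T)$, and $\Pi^0_a(T)\subseteq E^0_a(T)$ (each pole is a left pole, each pole is an isolated eigenvalue, and the ``$0$'' superscript versions impose the extra finite‑multiplicity/finite‑rank condition). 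These reduce the problem to showing that the two ``largest'' sets in the chain, namely $E_a(T)$ and $E(T)$, are contained in the ``smallest'', $\Pi^0(T)$.

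The key step is the inclusion $E_a(T)\subseteq\Pi^0(T)$. Let $\lambda\in E_a(T)$. Since $\Pi_a(T)\subseteq E_a(T)$ is immediate and, conversely, I want to use the hypothesis to push $\lambda$ into $\sigma(T)\setminus\sigma_W(T)$: because $\lambda$ is an isolated point of $\sigma_a(T)$ that is an eigenvalue, it lies in $\sigma_a(T)\subseteq\sigma(T)$; I claim $\lambda\notin\sigma_W(T)$. Indeed property $(Z_{E_a})$ gives $E_a(T)=\sigma(T)\setminus\sigma_W(T)$, so $\lambda\in\sigma(T)\setminus\sigma_W(T)$ directly, i.e. $T-\lambda I$ is Weyl. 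Now $\lambda$ being isolated in $\sigma_a(T)$ forces $T-\lambda I$ to have finite ascent (isolation in the approximate point spectrum yields SVEP for $T$ at $\lambda$, hence finite ascent of $T-\lambda I$); combined with $T-\lambda I$ Weyl this makes $T-\lambda I$ Browder, so $\lambda\in\Pi^0(T)$ by the identity $\sigma_b(T)=\sigma(T)\setminus\Pi^0(T)$ recalled in the text. The same reasoning, starting from $\lambda\in E(T)\subseteq$ iso\,$\sigma(T)$, shows $E(T)\subseteq\Pi^0(T)$ as well — in fact $E(T)\subseteq E_a(T)$ since iso\,$\sigma(T)\subseteq$ iso\,$\sigma_a(T)$ is false in general, so this case is handled separately but by the identical argument using isolation in $\sigma(T)$.

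Putting the pieces together: we have $\Pi^0(T)\subseteq\Pi(T)\subseteq E(T)\subseteq\Pi^0(T)$ and $\Pi^0(T)\subseteq\Pi^0_a(T)\subseteq\Pi_a(T)\subseteq E_a(T)\subseteq\Pi^0(T)$, and also $\Pi^0(T)\subseteq E^0(T)\subseteq E(T)\subseteq\Pi^0(T)$ and $\Pi^0(T)\subseteq E^0_a(T)\subseteq E_a(T)\subseteq\Pi^0(T)$. Hence all eight sets are squeezed between $\Pi^0(T)$ and $\Pi^0(T)$ and therefore coincide, which is the assertion of the lemma. The main obstacle I anticipate is the passage ``isolated in $\sigma_a(T)$ $\Rightarrow$ finite ascent of $T-\lambda I$'': this needs the fact that isolation of $\lambda$ in the approximate point spectrum gives SVEP at $\lambda$, and that SVEP at $\lambda$ together with $T-\lambda I$ being upper semi‑Fredholm of nonnegative index (here even Weyl) yields finite ascent; I would invoke the standard punctured‑neighbourhood/SVEP characterisations rather than reprove them. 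Everything else is bookkeeping with the inclusions.
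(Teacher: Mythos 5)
Your proof is correct and rests on the same mechanism as the paper's: property $(Z_{E_a})$ places each point of $E_a(T)$ in $\sigma(T)\setminus\sigma_W(T)$, isolation in $\sigma_a(T)$ gives SVEP and hence finite ascent, so $T-\lambda I$ is Browder and $\lambda\in\Pi^0(T)$, after which the standard inclusions collapse all eight sets (your organization, squeezing everything onto $\Pi^0(T)$, is in fact tidier than the paper's, which reaches the same conclusion through $E_a=\Pi_a^0$, then $E^0=\Pi^0$ via Weyl's theorem, then a separate argument for $\Pi_a\subseteq\Pi$). One harmless slip: your parenthetical claim that $\mathrm{iso}\,\sigma(T)\subseteq\mathrm{iso}\,\sigma_a(T)$ ``is false in general'' is itself false --- isolated spectral points are boundary points, hence lie in $\sigma_a(T)$ and are isolated there, so $E(T)\subseteq E_a(T)$ always holds --- but your separate treatment of $E(T)$ implicitly uses exactly this inclusion and goes through.
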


\begin{proof}
Suppose that $T$ satisfies property $(Z_{E_{a}})$,
then $\sigma(T)=\sigma_{W}(T)\sqcup E_a(T)$. Thus  $\mu\in
E_a(T)\Longleftrightarrow\mu\in\mbox{iso}\,\sigma_a(T)\cap\sigma_{W}(T)^C$
$\Longrightarrow \mu\in\Pi_a^0(T),$ where $\sigma_{W}(T)^C$ is the
complement of the Weyl spectrum of $T$. Hence
$E_a(T)=E_a^0(T)=\Pi_a^0(T)=\Pi_a(T)$, $\Pi(T)=\Pi^0(T)$ and
$E(T)=E^0(T).$ Consequently, $\sigma(T)=\sigma_{W}(T)\sqcup E_a^0(T).$ This implies  that
$E^0(T)=\Pi^0(T)$. Hence
$E_a(T)=E_a^0(T)=\Pi_a^0(T)=\Pi_a(T)$ and $\Pi^0(T)=\Pi(T)=E^0(T)=E(T).$ Since the inclusion  $\Pi(T)\subset\Pi_a(T)$ is always
 true, it suffices to  show its opposite. If $\mu\in\Pi_a(T),$ then $a(T-\mu I)$ is finite and since $T$ satisfies
 property $(Z_{E_{a}}),$ it follows that $\mu\in \Pi(T)$ and hence the equality desired.
 \end{proof}

\begin{cor} Let $T\in \mathcal{B}(X).$ The following statements are equivalent:\\
i)  $T$ satisfies property $(Z_{E_{a}});$ \\
ii) $T$ satisfies Weyl's theorem and $E^0(T)=E_a(T);$\\
iii) $T$ satisfies  Browder's theorem and $\Pi^0(T)=E_a(T).$\\
iv) $T$ satisfies generalized Weyl's Theorem and $E^0(T)=E_a(T);$
\end{cor}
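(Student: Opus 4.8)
The plan is to prove the four equivalences by observing that each of (ii), (iii), (iv) implies (i) almost immediately from the definitions, while the converse implications all follow by feeding property $(Z_{E_{a}})$ into Lemma \ref{lem0} and, for (iv), chaining a few of the implications recorded in the diagram of Section 2. So the real content is only the bookkeeping of which set equalities from Lemma \ref{lem0} are needed where.

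First I would dispatch the three ``backward'' implications. If $T$ satisfies Weyl's theorem and $E^0(T)=E_a(T)$, then $\sigma(T)\setminus\sigma_{W}(T)=E^0(T)=E_a(T)$, which is exactly property $(Z_{E_{a}})$; hence (ii)$\Rightarrow$(i). If $T$ satisfies Browder's theorem and $\Pi^0(T)=E_a(T)$, then $\sigma(T)\setminus\sigma_{W}(T)=\Pi^0(T)=E_a(T)$, so (iii)$\Rightarrow$(i). For (iv)$\Rightarrow$(i), I would invoke the arrow $gW\to W$ of the diagram (\cite{BE1}): generalized Weyl's theorem implies Weyl's theorem, and then the hypothesis $E^0(T)=E_a(T)$ again forces $\sigma(T)\setminus\sigma_{W}(T)=E^0(T)=E_a(T)$, i.e. property $(Z_{E_{a}})$.

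Next I would prove the ``forward'' implications using Lemma \ref{lem0}. Assuming property $(Z_{E_{a}})$, Lemma \ref{lem0} yields the chain $E_a(T)=E_a^0(T)=\Pi_a^0(T)=\Pi_a(T)=\Pi^0(T)=\Pi(T)=E^0(T)=E(T)$. In particular $E^0(T)=E_a(T)$, and the defining identity $\sigma(T)\setminus\sigma_{W}(T)=E_a(T)$ rewrites as $\sigma(T)\setminus\sigma_{W}(T)=E^0(T)$, which is Weyl's theorem; this is (i)$\Rightarrow$(ii). Running the same argument with $\Pi^0(T)$ in place of $E^0(T)$ gives Browder's theorem together with $\Pi^0(T)=E_a(T)$, which is (i)$\Rightarrow$(iii).

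The one step I expect to require a little more care is (i)$\Rightarrow$(iv), since property $(Z_{E_{a}})$ is stated with the Weyl spectrum while generalized Weyl's theorem involves the B-Weyl spectrum. Here I would chain the implications already in hand: property $(Z_{E_{a}})$ gives Weyl's theorem (the step just proved), Weyl's theorem gives Browder's theorem (the arrow $W\to B$, \cite{Barnes}), and Browder's theorem is equivalent to generalized Browder's theorem (the equivalence $B\Leftrightarrow gB$, \cite{AmZg1}), so $\sigma(T)\setminus\sigma_{BW}(T)=\Pi(T)$. Applying Lemma \ref{lem0} once more to replace $\Pi(T)$ by $E(T)$ turns this into $\sigma(T)\setminus\sigma_{BW}(T)=E(T)$, i.e. generalized Weyl's theorem; combined with $E^0(T)=E_a(T)$ (again from Lemma \ref{lem0}) this is (iv). Together with the backward implications, this establishes the equivalence of (i)--(iv).
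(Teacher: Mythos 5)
Your proposal is correct and follows essentially the same route as the paper: (i)--(iii) are equivalent by direct substitution using the set equalities of Lemma \ref{lem0}, and (i)$\Leftrightarrow$(iv) is handled exactly as in the text, passing through Browder's and generalized Browder's theorems plus $\Pi(T)=E(T)$ in one direction and using $gW\Rightarrow W$ with $E^0(T)=E_a(T)$ in the other. The only quibble is a citation slip: the arrow $gW\Rightarrow W$ in the diagram is attributed to \cite{BK} (the paper invokes \cite[Theorem 3.9]{BK}), not to \cite{BE1}.
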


\begin{proof} The equivalence between the first three statements is clear.\\
 To prove the equivalence  between (i) and  (iv). If $T$ satisfies property $(Z_{E_{a}}),$ then $T$ satisfies
 Browder's theorem and then generalized Browder's theorem too. Thus from Lemma \ref{lem0}, $T$
 satisfies generalized Weyl's theorem and $E^0(T)=E_a(T).$ Conversely, suppose that  $T$ satisfies generalized Weyl's
  and $E^0(T)=E_a(T).$ From \cite[Theorem 3.9]{BK}, $T$ satisfies Weyl's theorem $\sigma(T)\setminus\sigma_{W}(T)= E^0(T)=E_a(T).$
   So $T$ satisfies property $(Z_{E_{a}}).$

\end{proof}

Following  \cite{KK}, an operator $T\in \mathcal{B}(X)$ is said to satisfy property $(k)$
if $\sigma(T)\setminus\sigma_{W}(T)= E(T).$ For the definition of property $(k),$ see also  the reference \cite{BE-Kachad} in which this property
is named $(W_E).$  From  Lemma \ref{lem0} we have immediately the
next corollary:

\begin{cor} \label{cor2}Let $T\in \mathcal{B}(X)$. The following statements are equivalent:\\
i) $T$ satisfies property $(Z_{E_{a}});$\\
ii) $T$ satisfies property $(Z_{\Pi_{a}})$ and $E_a(T)=\Pi_a(T);$\\
iii) $T$ satisfies property $(k)$ and $E_a(T)=E(T).$\end{cor}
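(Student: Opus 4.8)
The plan is to prove Corollary \ref{cor2} by reducing everything to Lemma \ref{lem0} together with the elementary observation that property $(Z_{\Pi_a})$ (resp. property $(k)$) plus the indicated extra hypothesis literally spells out the defining equation of property $(Z_{E_a})$.

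\textbf{From (i) to (ii) and (iii).} Suppose $T$ satisfies property $(Z_{E_a})$, so $\sigma(T)\setminus\sigma_W(T)=E_a(T)$. By Lemma \ref{lem0} we have the full chain of equalities $E_a(T)=\Pi_a(T)=E(T)$. The first of these gives the extra hypothesis in (ii), and substituting $\Pi_a(T)$ for $E_a(T)$ in the defining equation yields $\sigma(T)\setminus\sigma_W(T)=\Pi_a(T)$, i.e. property $(Z_{\Pi_a})$; hence (ii) holds. Likewise the equality $E_a(T)=E(T)$ is the extra hypothesis in (iii), and substituting $E(T)$ for $E_a(T)$ gives $\sigma(T)\setminus\sigma_W(T)=E(T)$, which is property $(k)$; hence (iii) holds.

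\textbf{From (ii) to (i) and from (iii) to (i).} These are immediate and require no deep input: if $T$ satisfies property $(Z_{\Pi_a})$, then $\sigma(T)\setminus\sigma_W(T)=\Pi_a(T)$, and the hypothesis $\Pi_a(T)=E_a(T)$ converts this into $\sigma(T)\setminus\sigma_W(T)=E_a(T)$, so $T$ satisfies $(Z_{E_a})$. The same one-line substitution handles the implication from (iii): property $(k)$ gives $\sigma(T)\setminus\sigma_W(T)=E(T)$, and the hypothesis $E(T)=E_a(T)$ finishes it.

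\textbf{Remarks on difficulty.} There is essentially no obstacle here: the corollary is a purely formal consequence of Lemma \ref{lem0}, which has already done the substantive work of collapsing all the relevant spectral sets. The only point to be mildly careful about is that the equalities needed in the forward direction — $E_a(T)=\Pi_a(T)$ and $E_a(T)=E(T)$ — are both explicitly among the conclusions of Lemma \ref{lem0}, so one simply quotes that lemma rather than re-deriving anything; the reverse directions use no hypothesis beyond the stated set equality. I would write the proof in three or four lines, citing Lemma \ref{lem0} for (i)$\Rightarrow$(ii) and (i)$\Rightarrow$(iii), and observing that (ii)$\Rightarrow$(i) and (iii)$\Rightarrow$(i) are trivial substitutions into the respective defining identities.
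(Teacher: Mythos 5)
Your proof is correct and follows exactly the route the paper intends: the paper simply states that the corollary follows "immediately" from Lemma \ref{lem0}, and your argument spells out that deduction — the lemma supplies the equalities $E_a(T)=\Pi_a(T)=E(T)$ for the forward implications, and the reverse implications are direct substitutions into the defining identities.
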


\begin{ex}\label{ex1} Generally, we cannot expect that property $(Z_{E_{a}})$ holds for an
operator satisfying property $(Z_{\Pi_{a}})$ or property $(k)$,
as we can see in  the following example.

\begin{enumerate} \item Let $Q\in \mathcal{B}(X)$ be a quasi-nilpotent operator acting on an  infinite
dimensional Banach space $X$    such that $\mathcal{R}(Q^n)$ is non-closed for
all $n\in \N$ and  let $T=0\oplus Q$ defined on the Banach space $X\oplus X.$
Clearly,   $\sigma_{W}(T)=\sigma_{BW}(T)=\sigma(T)=\{0\}$,
 $E_a(T)=\{0\}$ and
$\Pi_a(T)=\emptyset$. So $T$ satisfies property $(Z_{\Pi_{a}})$, but
it does not satisfy property $(Z_{E_{a}})$.
\item Let $T$ be the operator given by the direct sum of the unilateral
right shift operator  $R$ and the projection operator $U$
defined   in the first point of Remark \ref{rema1} below.
Then $\sigma(T)=D(0, 1)$, $\sigma_{W}(T)=D(0, 1)$, $E(T)=\emptyset$.
So $T$ satisfies property $(k),$ but it  does not satisfy
property $(Z_{E_{a}})$, since $E_a(T)=\{0\}$.\end{enumerate}
\end{ex}

In the following theorem we establish a relationship between property $(Z_{E_{a}})$ and the properties $(gaw),$ $(aw),$  $(Baw)$
 (see Definition\ref{dfn1}).

\begin{thm}\label{thm1} Let $T\in \mathcal{B}(X)$. The following statements are equivalent:\\
i) $T$ satisfies property
$(Z_{E_{a}});$\\
ii)  $T$ satisfies property $(gaw)$ and $\sigma_{BW}(T)=\sigma_{W}(T);$\\
iii) $T$ satisfies property $(aw)$ and $E_a(T)=E_a^0(T);$\\
iv) $T$ satisfies property $(Baw)$ and $E_a(T)=E_a^0(T);$\end{thm}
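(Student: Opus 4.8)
The plan is to prove the four-way equivalence by showing that each of (ii), (iii), (iv) is nothing more than property $(Z_{E_{a}})$ read through Lemma~\ref{lem0}, using freely the implications recorded in the diagram of Section~2. The running observation I would isolate first is that, once $T$ satisfies $(Z_{E_{a}})$, Lemma~\ref{lem0} gives the complete collapse $E_a(T)=E_a^0(T)=\Pi_a^0(T)=\Pi_a(T)=\Pi^0(T)=\Pi(T)=E^0(T)=E(T)$, and the corollary following it shows $T$ satisfies Browder's theorem, hence (by $B\Leftrightarrow gB$, \cite{AmZg1}) generalized Browder's theorem, so that $\sigma(T)\setminus\sigma_{BW}(T)=\Pi(T)$. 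I would also record the standard inclusions $\sigma_{BW}(T)\subseteq\sigma_{W}(T)\subseteq\sigma(T)$, which will be used to turn equalities of ``spectral holes'' into equalities of spectra.

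For (i)$\Leftrightarrow$(iii): if $(Z_{E_{a}})$ holds then $\sigma(T)\setminus\sigma_{W}(T)=E_a(T)=E_a^0(T)$ by the collapse, so $T$ satisfies $(aw)$ and $E_a(T)=E_a^0(T)$; conversely $(aw)$ together with $E_a(T)=E_a^0(T)$ gives $\sigma(T)\setminus\sigma_{W}(T)=E_a^0(T)=E_a(T)$, i.e. $(Z_{E_{a}})$. For (i)$\Leftrightarrow$(iv): if $(Z_{E_{a}})$ holds, the running observation gives $\sigma(T)\setminus\sigma_{BW}(T)=\Pi(T)=E_a^0(T)$, hence $(Baw)$ holds and again $E_a(T)=E_a^0(T)$; conversely $(Baw)$ implies $(aw)$ (diagram, \cite{BZ3}), so with $E_a(T)=E_a^0(T)$ we land in case (iii) and conclude $(Z_{E_{a}})$.

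For (i)$\Leftrightarrow$(ii): if $(Z_{E_{a}})$ holds, then $\sigma(T)\setminus\sigma_{BW}(T)=\Pi(T)=E_a(T)$, which is property $(gaw)$; moreover $\sigma(T)\setminus\sigma_{BW}(T)=E_a(T)=\sigma(T)\setminus\sigma_{W}(T)$, and since both the Weyl and the B-Weyl spectra are contained in $\sigma(T)$ this forces $\sigma_{BW}(T)=\sigma_{W}(T)$. Conversely, if $(gaw)$ holds and $\sigma_{BW}(T)=\sigma_{W}(T)$, then $\sigma(T)\setminus\sigma_{W}(T)=\sigma(T)\setminus\sigma_{BW}(T)=E_a(T)$, i.e. $(Z_{E_{a}})$.

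The argument is essentially bookkeeping, so I do not expect a serious obstacle; the only point needing a genuine Fredholm-theoretic input is the implication $(Baw)\Rightarrow(aw)$ used in (iv)$\Rightarrow$(i) — that a B-Weyl operator $T-\lambda I$ with $\lambda$ isolated in $\sigma_a(T)$ is in fact Weyl. Since this implication is already in the diagram I would simply cite it, but if a self-contained proof were wanted, the hard part would be that local argument (via a punctured-disc decomposition around $\lambda$), not anything in the equivalence itself.
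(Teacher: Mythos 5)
Your proof is correct and follows essentially the same route as the paper: everything is reduced to the collapse of Lemma \ref{lem0} together with Browder's theorem (hence generalized Browder's theorem) for $T$, with the converses being the same unwinding of definitions. The only cosmetic differences are that for (i)$\Rightarrow$(ii) you derive $(gaw)$ directly from generalized Browder's theorem where the paper cites \cite{BZ1}, and for (iv)$\Rightarrow$(i) you invoke the diagram implication $(Baw)\Rightarrow(aw)$ where the paper cites \cite[Theorem 3.3]{ZZ} for $\sigma_{BW}(T)=\sigma_{W}(T)$ --- these amount to the same thing.
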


\begin{proof}(i) $\Longleftrightarrow$ (iii) Suppose that $T$ satisfies property $(Z_{E_{a}})$,
then from Lemma \ref{lem0}, $\sigma(T)=\sigma_{W}(T)\sqcup E_a(T)=\sigma_{W}(T)\sqcup E_a^0(T).$ So $T$ satisfies property $(aw)$ and $E_a(T)=E_a^0(T).$
  The converse is clear.\\
(i) $\Longleftrightarrow$ (ii) If $T$ satisfies property $(Z_{E_{a}})$, then it satisfies property $(aw).$ Since by Lemma \ref{lem0} we have
$E_a(T)=\Pi(T),$ it follows
 from \cite[Theorem 2.2]{BZ1} that $T$ satisfies property $(gaw),$ and this entails  that $\sigma_{BW}(T)=\sigma(T)\setminus
E_a(T)=\sigma_{W}(T).$ The converse is obvious.\\
(i) $\Longleftrightarrow$ (iv) If $T$ satisfies property $(Z_{E_{a}}),$ then $\sigma_{BW}(T)=\sigma_{W}(T)$ and $E_a(T)=E_a^0(T).$
So  $\sigma(T)\setminus\sigma_{BW}(T)= E_a^0(T),$ i.e. $T$ satisfies property $(Baw).$ Conversely,
suppose that $T$ satisfies property  $(Baw)$ and $E_a(T)=E_a^0(T).$ By \cite[Theorem 3.3]{ZZ} we have
 $\sigma_{BW}(T)=\sigma_{W}(T).$ Thus  $E_a(T)=E_a^0(T)=\sigma(T)\setminus\sigma_{BW}(T)=\sigma(T)\setminus\sigma_{W}(T),$
 and  $T$ satisfies property $(Z_{E_{a}}).$\end{proof}

\begin{rema}\label{rema1}  From Theorem \ref{thm1}, if $T\in \mathcal{B}(X)$ satisfies
property  $(Z_{E_{a}})$ then it satisfies property $(\delta);$ where $\delta \in\{ gaw, aw, Baw\}$. However,
the converse  in general is not true. To see this,
\begin{enumerate}
\item Let  $U\in
L(\ell^2(\N)$ be defined by $U(x_1, x_2, x_3, ...)=(0, x_2, x_3,
...)$. Then $\sigma(U)=\{0, 1\},$ $\sigma_{W}(U)=\{1\}$,
$E_a(U)=\{0, 1\}$ and $\sigma_{BW}(U)=\emptyset$. So
 $U$ satisfies property $(gaw)$ and then property $(aw).$ But it
does not satisfy property $(Z_{E_{a}})$, because
$\sigma(U)\setminus\sigma_{W}(U)\neq E_a(U).$ Here $E_a^0(U)=\{0\}.$
\item   On the Banach space    $\ell^2(\N)\oplus \ell^2(\N),$ we consider the operator $T$  defined by $T=0\oplus R.$  We have   $T$
satisfies property $(Baw),$ since $\sigma(T)=\sigma_{BW}(T)=D(0, 1)$ and $E_a^0(T)=\emptyset.$ But it does not satisfy property
  $(Z_{E_{a}}),$ since   $\sigma_{W}(T)=D(0, 1)$ and  $E_a(T)=\{0\}.$
\end{enumerate}
\end{rema}

\begin{lem}\label{lem2} Let $T\in \mathcal{B}(X).$ If $T$  satisfies property $(Z_{\Pi_{a}}),$ then
\[\Pi_a^0(T)=\Pi_a(T)=\Pi^0(T)=\Pi(T).\] \end{lem}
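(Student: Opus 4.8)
The plan is to imitate the proof of Lemma \ref{lem0}, but working with $\Pi_a(T)$ in place of $E_a(T)$; since $(Z_{\Pi_a})$ is a Browder-type hypothesis it will only let us collapse the chain of pole-type sets, which is exactly what the statement asks. Assuming $T$ satisfies property $(Z_{\Pi_a})$, we have the disjoint decomposition $\sigma(T)=\sigma_{W}(T)\sqcup\Pi_a(T)$, and everything will be extracted from this.

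First I would record the inclusions that hold for every operator, with no use of the hypothesis: $\Pi^0(T)\subseteq\Pi(T)\subseteq\Pi_a(T)$ and $\Pi^0(T)\subseteq\Pi_a^0(T)\subseteq\Pi_a(T)$. Only $\Pi(T)\subseteq\Pi_a(T)$ needs a word: if $\mu$ is a pole of order $p$ then $a(T-\mu I)=\delta(T-\mu I)=p\geq 1$, so $\mu\in\sigma_a(T)$, and the classical decomposition $X=\NN((T-\mu I)^{p})\oplus\mathcal{R}((T-\mu I)^{p})$ shows that $\mathcal{R}((T-\mu I)^{p+1})=\mathcal{R}((T-\mu I)^{p})$ is closed, hence $\mu\in\Pi_a(T)$. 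Granted these inclusions, the whole lemma reduces to the single reverse inclusion $\Pi_a(T)\subseteq\Pi^0(T)$.

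To prove it, fix $\mu\in\Pi_a(T)$. By property $(Z_{\Pi_a})$ we have $\mu\in\sigma(T)\setminus\sigma_{W}(T)$, so $T-\mu I$ is Weyl, i.e. Fredholm with $\mbox{ind}(T-\mu I)=0$; and, being a left pole, $a(T-\mu I)<\infty$. The key step is to promote this to Browderness of $T-\mu I$: since $\mbox{ind}((T-\mu I)^{n})=n\,\mbox{ind}(T-\mu I)=0$, we get $\dim\NN((T-\mu I)^{n})=\dim X/\mathcal{R}((T-\mu I)^{n})$ for every $n$, so the non-decreasing sequence defining the descent stabilizes exactly when the one defining the ascent does; hence $\delta(T-\mu I)<\infty$ as well, $T-\mu I$ is Browder, and therefore $\mu\in\sigma(T)\setminus\sigma_b(T)=\Pi^0(T)$. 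Combined with the inclusions of the previous paragraph this gives $\Pi_a^0(T)=\Pi_a(T)=\Pi^0(T)=\Pi(T)$.

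The only mildly technical point is the implication ``Fredholm of index zero and finite ascent $\Rightarrow$ finite descent''; rather than spell out the sequence argument one could simply cite the standard fact from Fredholm theory (the same mechanism is used tacitly at the end of the proof of Lemma \ref{lem0}). Everything else is bookkeeping with the decomposition furnished by property $(Z_{\Pi_a})$.
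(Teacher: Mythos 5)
Your proof is correct and rests on the same mechanism as the paper's: a left pole lying in $\sigma(T)\setminus\sigma_{W}(T)$ has finite ascent and $T-\mu I$ Weyl, hence is a Browder point, i.e.\ belongs to $\Pi^0(T)$. The only difference is presentational --- you prove the inclusion $\Pi_a(T)\subseteq\Pi^0(T)$ in one pass and justify the ``index zero plus finite ascent implies finite descent'' step explicitly, whereas the paper routes the argument through $\Pi_a^0(T)$ and the decomposition $\sigma(T)=\sigma_{W}(T)\sqcup\Pi_a^0(T)$; both are sound.
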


\begin{proof} Suppose that $T$ satisfies property $(Z_{\Pi_{a}}),$
that's $\sigma(T)=\sigma_{W}(T)\sqcup\Pi_a(T).$ Then
$\mu\in\Pi_a(T)\Longleftrightarrow\mu\in\mbox{iso}\,\sigma_a(T)\cap\sigma_{W}(T)^C$
$\Longrightarrow \mu\in\Pi_a^0(T).$  This implies that
$\Pi_a(T)=\Pi_a^0(T)$  and  $\Pi(T)=\Pi^0(T).$ So  $\sigma(T)=\sigma_{W}(T)\sqcup\Pi_a^0(T)$ and this implies  that $\Pi^0(T)=\Pi_a^0(T).$ Therefore
$\Pi(T)=\Pi^0(T)=\Pi_a(T)=\Pi_a^0(T).$\end{proof}

In the following theorem we establish a relationship between the property $(Z_{\Pi_{a}}),$ the properties $(gab),$ $(ab),$  $(Bab)$
and the classical Browder's theorem (see Definition\ref{dfn0}).

\begin{thm}\label{thm2}Let $T\in \mathcal{B}(X)$. Then the following statements are equivalent:\\
i) $T$ satisfies property
$(Z_{\Pi_{a}});$\\
ii) $T$ satisfies property $(gab)$ and $\sigma_{BW}(T)=\sigma_{W}(T);$\\
iii) $T$ satisfies property $(ab)$ and $\Pi_a(T)=\Pi_a^0(T);$\\
iv) $T$ satisfies property $(Bab)$ and $\Pi_a(T)=\Pi_a^0(T).$\\
v) $T$ satisfies Browder's theorem and $\Pi_a(T)=\Pi^0(T).$\end{thm}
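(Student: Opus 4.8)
The plan is to prove Theorem~\ref{thm2} by establishing a cycle of implications anchored at (i), exactly mirroring the structure of the proof of Theorem~\ref{thm1}. The central observation is Lemma~\ref{lem2}: if $T$ satisfies $(Z_{\Pi_a})$ then all four sets $\Pi(T), \Pi^0(T), \Pi_a(T), \Pi_a^0(T)$ coincide, and moreover $\sigma(T)=\sigma_W(T)\sqcup\Pi_a(T)=\sigma_W(T)\sqcup\Pi^0(T)$. The last identity is precisely Browder's theorem, so $(Z_{\Pi_a})\Rightarrow B$ is immediate; combined with $\Pi_a(T)=\Pi^0(T)$ from Lemma~\ref{lem2}, this gives (i)$\Rightarrow$(v) with almost no work.

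First I would do (i)$\Longleftrightarrow$(iii): assuming (i), Lemma~\ref{lem2} gives $\sigma(T)\setminus\sigma_W(T)=\Pi_a(T)=\Pi_a^0(T)$, which is property $(ab)$ together with $\Pi_a(T)=\Pi_a^0(T)$; conversely, under (iii) one has $\sigma(T)\setminus\sigma_W(T)=\Pi_a^0(T)=\Pi_a(T)$, which is $(Z_{\Pi_a})$. Next (i)$\Longleftrightarrow$(ii): from (i) one has Browder's theorem, hence generalized Browder's theorem (using the equivalence $B\Longleftrightarrow gB$ of \cite{AmZg1} from the diagram), and since $\Pi_a(T)=\Pi(T)$ by Lemma~\ref{lem2} one should invoke the appropriate result from \cite{BZ3} (the analogue for property $(gab)$ of \cite[Theorem 2.2]{BZ1} used in Theorem~\ref{thm1}) to get property $(gab)$; then $\sigma_{BW}(T)=\sigma(T)\setminus\Pi_a(T)=\sigma_W(T)$ by Lemma~\ref{lem2} again. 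The converse is routine: $(gab)$ plus $\sigma_{BW}(T)=\sigma_W(T)$ forces $\sigma(T)\setminus\sigma_W(T)=\Pi_a(T)$. Then (i)$\Longleftrightarrow$(iv) runs parallel to the (i)$\Longleftrightarrow$(iv) argument in Theorem~\ref{thm1}: from (i) we have $\sigma_{BW}(T)=\sigma_W(T)$ and $\Pi_a(T)=\Pi_a^0(T)$, so $\sigma(T)\setminus\sigma_{BW}(T)=\Pi_a^0(T)$, which is $(Bab)$; conversely, $(Bab)$ with $\Pi_a(T)=\Pi_a^0(T)$ should yield $\sigma_{BW}(T)=\sigma_W(T)$ via the relevant result of \cite{ZZ} (the $(Bab)$ analogue of \cite[Theorem 3.3]{ZZ}), and then $\Pi_a^0(T)=\sigma(T)\setminus\sigma_{BW}(T)=\sigma(T)\setminus\sigma_W(T)$.

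Finally (v)$\Longrightarrow$(i): assume $T$ satisfies Browder's theorem, i.e. $\sigma(T)\setminus\sigma_W(T)=\Pi^0(T)$, and $\Pi_a(T)=\Pi^0(T)$. Then directly $\sigma(T)\setminus\sigma_W(T)=\Pi^0(T)=\Pi_a(T)$, which is exactly property $(Z_{\Pi_a})$. This closes the cycle (i)$\Rightarrow$(iii)$\Rightarrow\cdots$ and (v)$\Leftrightarrow$(i), so all five statements are equivalent.

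The step I expect to need the most care is (i)$\Rightarrow$(ii), specifically passing from Browder's theorem to property $(gab)$: in Theorem~\ref{thm1} this was handled by \cite[Theorem 2.2]{BZ1} after noting $E_a(T)=\Pi(T)$, and here the analogous leverage is $\Pi_a(T)=\Pi(T)$ from Lemma~\ref{lem2} together with the equivalence of Browder's and generalized Browder's theorems; one must cite the correct auxiliary statement (from \cite{BZ3} or \cite{AmZg1}) that upgrades generalized Browder's theorem plus $\Pi_a(T)=\Pi(T)$ to property $(gab)$. Everything else is set-theoretic bookkeeping with the spectral inclusions $\Pi(T)\subset\Pi_a(T)$, $\Pi^0(T)\subset\Pi_a^0(T)$, $\sigma_W(T)\subset\sigma_{BW}(T)$ and Lemma~\ref{lem2}.
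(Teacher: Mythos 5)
Your proposal is correct and follows essentially the same route as the paper: Lemma~\ref{lem2} does all the work, (i)$\Leftrightarrow$(iii) and (i)$\Leftrightarrow$(v) are immediate substitutions, and (i)$\Leftrightarrow$(iv) uses the same \cite{ZZ} result for $\sigma_{BW}(T)=\sigma_{W}(T)$. The only (harmless) divergence is in (i)$\Rightarrow$(ii), where the paper passes through property $(ab)$ and \cite[Theorem 2.8]{BZ3}, while you pass through generalized Browder's theorem and then substitute $\Pi(T)=\Pi_a(T)$ directly into $\sigma(T)\setminus\sigma_{BW}(T)=\Pi(T)$ --- which in fact needs no auxiliary citation beyond $B\Leftrightarrow gB$, so the step you flagged as delicate is actually the easiest one.
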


\begin{proof} (i) $\Longleftrightarrow$ (ii)  Suppose that $T$ satisfies property $(Z_{\Pi_{a}}),$
that's $\sigma(T)=\sigma_{W}(T)\sqcup\Pi_a(T).$ From Lemma \ref{lem2}, $\sigma(T)=\sigma_{W}(T)\sqcup\Pi_a^0(T).$
So  $T$ satisfies
property $(ab)$.  As
$\Pi(T)=\Pi_a(T)$, then from \cite[Theorem 2.8]{BZ3}, $T$ satisfies
property $(gab)$. Moreover,
$\sigma_{BW}(T)=\sigma(T)\setminus\Pi_a(T)=\sigma_{W}(T)$. The
reverse implication is obvious.\\
(i) $\Longleftrightarrow$ (iii) Follows directly from Lemma \ref{lem2}.\\
 (i) $\Longleftrightarrow$ (iv) If $T$ satisfies property $(Z_{\Pi_{a}}),$ then
  $\sigma(T)\setminus\sigma_{BW}(T)=\sigma(T)\setminus\sigma_{W}(T)=\Pi_a^0(T)=\Pi_a(T).$ So $T$ satisfies property $(Bab).$
  Conversely, the property $(Bab)$ for $T$ implies   from   \cite[Theorem 3.6]{ZZ} that  $\sigma_{BW}(T)=\sigma_{W}(T).$
   So $\sigma_{W}(T)=\sigma(T)\setminus\Pi_a^0(T)=\sigma(T)\setminus\Pi_a(T)$ and this means that $T$ satisfies property $(Z_{\Pi_{a}}).$
  The equivalence between assertions (i) and  (v) is clear.\end{proof}

\begin{rema}
 From   Theorem \ref{thm2},  It follows that:
 \begin{enumerate}
 \item If $T$ satisfies property $(Z_{\Pi_{a}}),$ then it satisfies property $(gab)$
 and then property $(ab)$ and Browder's theorem. But the converses are not true in general.  For this,
let $T\in L(\ell^2(\N))$ be defined by $T(x_1, x_2, x_3, ...)=(0, 0,
\frac{1}{3}x_1, 0, 0, ...).$ Thus $\sigma(T)=\sigma_{W}(T)=\{0\}$
and $\Pi_a(T)=\{0\}$ and since $T$ is nilpotent, then
$\sigma_{BW}(T)=\emptyset.$ So  $T$
satisfies property $(gab)$ and then property $(ab)$ and Browder's theorem. But $T$ does not satisfy property
$(Z_{\Pi_{a}}).$
\item Also  we  cannot expect that property $(Z_{\Pi_{a}})$ holds for an operator satisfying property
$(Bab),$ as we can see in the following: It is easily seen that the operator $T$ defined in the second point of Remark \ref{rema1}
satisfies property $(Bab)$ and it does not satisfy property $(Z_{\Pi_{a}}).$ Here $\Pi_a(T)=\{0\}$ and $\Pi_a^0(T)=\emptyset.$
\end{enumerate}
\end{rema}

The following property has relevant role in local
spectral theory: a bounded linear
 operator $T\in \mathcal{B}(X)$ is said to have the {\it single-valued
 extension property} (SVEP for short) at $\lambda\in\mathbb{C}$ if
  for every open neighborhood $U_\lambda$ of $\lambda,$ the  function $f\equiv 0$ is the only
 analytic solution of the equation
 $(T-\mu I)f(\mu)=0\quad\forall\mu\in U_\lambda.$ We denote by
  ${\mathcal S}(T)=\{\lambda\in\mathbb{C}: T\mbox{  does not have SVEP at } \lambda\}$
     and we say that  $T$ has  SVEP
  if
$ {\mathcal S}(T)=\emptyset.$ We say that $T$ has SVEP on $A\subset\mathbb{C},$ if $T$ has SVEP
at every $\lambda\in A.$ ( For more details about this property, we refer the reader to \cite{LN}).

\begin{prop}\label{thm0} Let $T\in \mathcal{B}(X).$ If   $T$ or its dual  $T^*$ has SVEP on
 ${\sigma_{W}(T)}^C$ then  $T$ satisfies
property  $(Z_{E_{a}})$ if and only if  $E_a(T)=\Pi^0(T);$ where ${\sigma_{W}(T)}^C$
 is the complement of the Weyl spectrum of $T.$\end{prop}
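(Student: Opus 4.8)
The plan is to prove both implications directly, using the characterization of property $(Z_{E_a})$ together with the consequences of SVEP on ${\sigma_W(T)}^C$.

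First, the forward implication is essentially free: if $T$ satisfies property $(Z_{E_a})$, then by Lemma \ref{lem0} we already have $E_a(T)=\Pi^0(T)$, and no hypothesis on SVEP is needed for this direction. So the content is the converse.

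For the converse, assume $E_a(T)=\Pi^0(T)$ and that $T$ (resp.\ $T^*$) has SVEP on ${\sigma_W(T)}^C$. The target is $\sigma(T)\setminus\sigma_W(T)=E_a(T)$. The key standard fact I would invoke is that SVEP of $T$ on ${\sigma_W(T)}^C$ forces $\sigma_W(T)=\sigma_b(T)$ (and dually for $T^*$); this is the classical statement that Browder's theorem holds precisely when $T$ or $T^*$ has SVEP at the points off the Weyl spectrum. Granting Browder's theorem, $\sigma(T)\setminus\sigma_W(T)=\sigma(T)\setminus\sigma_b(T)=\Pi^0(T)$. Combining with the hypothesis $\Pi^0(T)=E_a(T)$ gives $\sigma(T)\setminus\sigma_W(T)=E_a(T)$, i.e.\ property $(Z_{E_a})$. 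This also matches the $(iii)$ characterization in the Corollary following Lemma \ref{lem0}: $T$ satisfies $(Z_{E_a})$ iff $T$ satisfies Browder's theorem and $\Pi^0(T)=E_a(T)$.

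The only real step to justify carefully is that SVEP on ${\sigma_W(T)}^C$ implies Browder's theorem; I expect this to be the main obstacle in the sense that it is where one cites the precise version of a known result (e.g.\ from \cite{LN} or the Weyl-type-theorems literature). The subtlety is that one needs SVEP \emph{at every point of the complement of $\sigma_W(T)$}, not merely at isolated points of the spectrum, so that for each $\lambda\notin\sigma_W(T)$ the operator $T-\lambda I$, being Fredholm of index zero, has finite ascent (if $T$ has SVEP at $\lambda$) or finite descent (if $T^*$ does), and hence is Browder; this yields $\sigma_b(T)\subseteq\sigma_W(T)$, and the reverse inclusion is automatic. Once that is in hand the rest is a one-line substitution.
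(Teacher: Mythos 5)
Your proposal is correct and follows essentially the same route as the paper: the forward direction is immediate from Lemma \ref{lem0} without any SVEP hypothesis, and the converse reduces to the known fact that SVEP of $T$ or $T^*$ on ${\sigma_{W}(T)}^C$ is equivalent to Browder's theorem, whence $\sigma(T)\setminus\sigma_{W}(T)=\Pi^0(T)=E_a(T)$. Your added justification (Fredholm of index zero plus finite ascent or finite descent yields a Browder point) is exactly the mechanism the paper leaves implicit.
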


 \begin{proof} If $T$ satisfies property  $(Z_{E_{a}}),$ then from Lemma \ref{lem0},  $E_a(T)=\Pi^0(T).$ Remark
that in this implication, the condition of SVEP for $T$ or $T^*$ is not necessary. Conversely, assume that $E_a(T)=\Pi^0(T).$
 Note that   $T$ has SVEP  on
 ${\sigma_{W}(T)}^C$ $\Longleftrightarrow$ $T^*$ has SVEP  on
 ${\sigma_{W}(T)}^C.$ But this is equivalent to say that   $T$ satisfies Browder's theorem $\sigma(T)\setminus\sigma_{W}(T)=\Pi^0(T)=E_a(T).$
 So $T$ satisfies property  $(Z_{E_{a}}).$ \end{proof}

 \begin{rema}The hypothesis   $T$ or $T^*$ has SVEP on  ${\sigma_{W}(T)}^C$  is crucial as  shown in this example:
define the operator  $T$ by    $T=R\oplus R^*$.  We have  $\sigma(T)
= D(0, 1)$ and $\Pi^0(T)=\emptyset.$ But, since  $ \dim \NN(T)=\mbox{codim}\mathcal{R}(T)=1,$ then $0\not\in\sigma_{W}(T).$ So $T$ does not satisfy property $(Z_{E_{a}}).$
Note that $T$ and $T^*$ do not have SVEP at
 $0\in {\sigma_{W}(T)}^C,$ as
 ${\mathcal S}(T)={\mathcal S}(T^*)={\mathcal S}(U^*) =\{\lambda \in\mathbb{C}:
0\leq|\lambda|<1\}.$
 \end{rema}

 Similarly, we have the following proposition for the property  $(Z_{\Pi_{a}}).$

 \begin{prop} If   $T\in \mathcal{B}(X)$ or its dual $T^*$ has SVEP on
 ${\sigma_{W}(T)}^C$ then  $T$ satisfies
property  $(Z_{\Pi_{a}})$ if and only if  $\
\Pi_a(T)=\Pi^0(T).$\end{prop}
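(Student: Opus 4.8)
The plan is to mirror the proof of Proposition \ref{thm0} almost verbatim, replacing the role of $E_a(T)$ by $\Pi_a(T)$ and invoking Lemma \ref{lem2} in place of Lemma \ref{lem0}. First I would dispose of the easy direction: if $T$ satisfies property $(Z_{\Pi_{a}})$, then Lemma \ref{lem2} gives immediately $\Pi_a(T)=\Pi^0(T)$, and here no SVEP hypothesis is needed. The content is therefore entirely in the converse.

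For the converse, assume $\Pi_a(T)=\Pi^0(T)$ and that $T$ (equivalently $T^*$, since SVEP of $T$ on $\sigma_W(T)^C$ is equivalent to SVEP of $T^*$ on $\sigma_W(T)^C$ — this symmetry on the set where $T-\lambda I$ is Weyl is the standard fact already used in Proposition \ref{thm0}) has SVEP on $\sigma_W(T)^C$. The key step is to recall that SVEP of $T$ or $T^*$ on $\sigma_W(T)^C$ is exactly equivalent to $T$ satisfying Browder's theorem, i.e. $\sigma(T)\setminus\sigma_W(T)=\Pi^0(T)$. Combining this with the hypothesis $\Pi^0(T)=\Pi_a(T)$ yields $\sigma(T)\setminus\sigma_W(T)=\Pi_a(T)$, which is precisely property $(Z_{\Pi_{a}})$.

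I do not anticipate any real obstacle here: every ingredient — Lemma \ref{lem2}, the equivalence between SVEP on $\sigma_W(T)^C$ and Browder's theorem, and the self-duality of that SVEP condition on the Weyl-resolvent set — is already available in the excerpt or is the standard local-spectral-theory fact cited via \cite{LN}. The only point requiring a word of care is making explicit that the implication $(Z_{\Pi_{a}})\Rightarrow \Pi_a(T)=\Pi^0(T)$ does not use SVEP, so that the proposition is genuinely a biconditional under the stated hypothesis. A short remark paralleling the one after Proposition \ref{thm0}, showing that the SVEP hypothesis cannot be dropped, could also be appended, but strictly speaking it is not part of the proof.

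\begin{proof}
If $T$ satisfies property $(Z_{\Pi_{a}})$, then by Lemma \ref{lem2} we have $\Pi_a(T)=\Pi^0(T)$; note that this implication does not require the SVEP hypothesis. Conversely, assume $\Pi_a(T)=\Pi^0(T)$. Since $T-\lambda I$ is a Weyl operator for every $\lambda\in\sigma_{W}(T)^C$, the operator $T$ has SVEP on $\sigma_{W}(T)^C$ if and only if $T^*$ has SVEP on $\sigma_{W}(T)^C$, and this condition is equivalent to saying that $T$ satisfies Browder's theorem, that is, $\sigma(T)\setminus\sigma_{W}(T)=\Pi^0(T)$. Hence $\sigma(T)\setminus\sigma_{W}(T)=\Pi^0(T)=\Pi_a(T)$, which is exactly property $(Z_{\Pi_{a}})$.
\end{proof}
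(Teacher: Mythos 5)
Your proof is correct and follows exactly the route the paper intends: the paper simply says the proposition is ``obtained by an argument similar to the one of the preceding proof,'' and your argument is precisely that adaptation, using Lemma \ref{lem2} for the easy direction and the equivalence of SVEP on ${\sigma_{W}(T)}^C$ with Browder's theorem for the converse. No issues.
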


 \begin{proof} Obtained by an argument similar to the one of the  preceding proof.
 \end{proof}

Now,  we give a summary of the results obtained in this section. In the following diagram which is a combination  with the first presented above,
 arrows signify implications and
the numbers near the arrows are references to the results obtained in  in
this section (numbers without brackets) or to the bibliography therein (the
numbers in square brackets).

\vspace{5pt}

\vbox{
\[
\begin{CD}@.@.@.(Baw)@.(Z_{E_a})@>\mbox{{\scriptsize\ref{cor2}}}>>(k)\\
 @.@.@.@VV\mbox{{\scriptsize\cite{ZZ}}}V@.@VV\mbox{{\scriptsize\cite{BE-Kachad}}}V\\
 (Baw)@<\mbox{{\scriptsize\ref{thm1}}}<<(Z_{E_a})@>\mbox{{\scriptsize\ref{thm1}}}>>(gaw)
@>\mbox{{\scriptsize\cite{BZ3}}}>> (aw) @>\mbox{{\scriptsize\cite
{BZ1}}}>>\mbox{$W$}@< \mbox{{\scriptsize\cite
{BK}}}<<\mbox{$gW$} \\
@VV\mbox{{\scriptsize\cite{ZZ}}}V @VV\mbox{{\scriptsize\ref{cor2}}}V@VV\mbox{{\scriptsize\cite{ZZ}}}V@VV\mbox{{\scriptsize\cite{BZ3}}}V
@VV\mbox{{\scriptsize\cite{Barnes}}}V@VV\mbox{{\scriptsize\cite{BE1}}}V\\
(Bab)@<\mbox{{\scriptsize\ref{thm2}}}<<(Z_{\Pi_a})@>>\mbox{{\scriptsize\ref{thm2}}}> (gab)
@>>\mbox{{\scriptsize\cite{BZ3}}}> (ab)
@>>\mbox{{\scriptsize\cite{BZ3}}}> \mbox{$B$}& \Longleftrightarrow_{\mbox{{\scriptsize\cite{AmZg1}}}} &\mbox{  $gB$}\\
@.@.@.@AA\mbox{{\scriptsize\cite{ZZ}}}A\\
@.@.@.(Bab)\\
\end{CD}
\]
}

\section{Preservation under commuting  Riesz perturbations}
We recall that an operator $R\in \mathcal{B}(X)$ is
said to
  be \textit{Riesz} if $R-\mu I$ is Fredholm for every
  non-zero complex $\mu,$ that is, $\pi(R)$ is quasinilpotent in the
  Calkin algebra $C(X)=\mathcal{B}(X)/\kx$ where  $\pi$ is the canonical mapping of
  $ \mathcal{B}(X)$ into $C(X).$

 We denote by  $\f^0x,$  the class of finite rank power operators as follows:
   \[\f^0x=\{ S\in \mathcal{B}(X) : S^n \in \fx \mbox { for some } n\in \mathbb{\mathbb{N}} \}.\]
    Clearly, \[\fx\cup \nx\subset \f^0x\subset \rx, \mbox{ and }  \kx\cup \qx\subset \rx.\]

We start this section by the following nilpotent  perturbation result.

\begin{prop}\label{prop1} Let $T\in \mathcal{B}(X)$ and let $N\in \mathcal{N}(X)$ which commutes with $T.$
Then $T$ satisfies property $(s)$ if and only if $T+N$ satisfies property $(s);$  where $s\in\{ Z_{E_{a}}, Z_{\Pi_{a}}\}.$ \end{prop}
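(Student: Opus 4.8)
The plan is to reduce the statement to the invariance of the various spectra and spectral subsets under nilpotent perturbations commuting with $T$. Concretely, it suffices to show that if $N\in\mathcal{N}(X)$ commutes with $T$, then
\[\sigma(T+N)=\sigma(T),\quad \sigma_{W}(T+N)=\sigma_{W}(T),\quad \sigma_a(T+N)=\sigma_a(T),\]
and moreover $\sigma_p(T+N)$, $E_a(\cdot)$ and $\Pi_a(\cdot)$ are preserved. Once these equalities are in hand, property $(Z_{E_a})$ for $T$, namely $\sigma(T)\setminus\sigma_W(T)=E_a(T)$, transfers verbatim to $T+N$, and likewise for $(Z_{\Pi_a})$; the converse follows by symmetry, since $T=(T+N)+(-N)$ and $-N$ is again nilpotent and commutes with $T+N$.

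First I would recall the standard facts about commuting nilpotent perturbations. Since $N$ commutes with $T$, it commutes with $T-\lambda I$ for every $\lambda$, and a commuting nilpotent perturbation does not change Fredholmness, the index, the ascent, the descent, nor the closedness of the ranges $\mathcal{R}((T-\lambda I)^k)$; hence $\sigma(T+N)=\sigma(T)$, $\sigma_{W}(T+N)=\sigma_{W}(T)$, $\sigma_b(T+N)=\sigma_b(T)$, and also the approximate point spectrum and the upper semi-Fredholm (and semi-Browder) spectra are unchanged, so $\sigma_a(T+N)=\sigma_a(T)$ and $\Pi_a(T+N)=\Pi_a(T)$. These are classical (see, e.g., the monograph by Laursen–Neumann, or Aiena's book), and I would simply cite them or sketch the ascent/descent argument. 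It then follows that $\mathrm{iso}\,\sigma_a(T+N)=\mathrm{iso}\,\sigma_a(T)$.

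Next I would handle $E_a(\cdot)=\mathrm{iso}\,\sigma_a(\cdot)\cap\sigma_p(\cdot)$. Since $\mathrm{iso}\,\sigma_a$ is already shown to be invariant, the only remaining point is to see that for $\mu\in\mathrm{iso}\,\sigma_a(T)=\mathrm{iso}\,\sigma_a(T+N)$ one has $\mu\in\sigma_p(T)$ iff $\mu\in\sigma_p(T+N)$. This is where some care is needed: $\sigma_p$ is not preserved by commuting nilpotent perturbations in general. The resolution is that at an isolated point $\mu$ of $\sigma_a$, the operator $T-\mu I$ is semi-Browder from below, so $a(T-\mu I)<\infty$; if $a(T-\mu I)\geq 1$ then $\mu\in\sigma_p(T)$, while if $a(T-\mu I)=0$ then $T-\mu I$ is bounded below, $\mu\notin\sigma_a(T)$, contradicting $\mu\in\sigma_a(T)$. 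Hence every isolated point of $\sigma_a(T)$ automatically lies in $\sigma_p(T)$, so $E_a(T)=\mathrm{iso}\,\sigma_a(T)$, and the same for $T+N$; invariance of $E_a$ is then immediate from invariance of $\mathrm{iso}\,\sigma_a$. (If one prefers, one can alternatively invoke the already-proven equalities of Lemma \ref{lem0} / Lemma \ref{lem2} in the presence of the property, but the direct argument is cleaner and symmetric.)

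The main obstacle, and the only non-formal point, is precisely this preservation of $E_a$ and $\Pi_a$ — i.e. making sure that passing from the bounded-below/Fredholm level to the eigenvalue-and-isolation level does not introduce a discrepancy. Once that is settled, the proof is a one-line substitution: for $s=Z_{E_a}$,
\[\sigma(T+N)\setminus\sigma_W(T+N)=\sigma(T)\setminus\sigma_W(T)=E_a(T)=E_a(T+N),\]
which is property $(Z_{E_a})$ for $T+N$; replacing $E_a$ by $\Pi_a$ gives the case $s=Z_{\Pi_a}$. The converse direction is obtained by applying the forward direction to $T+N$ and the commuting nilpotent $-N$.
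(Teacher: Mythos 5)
Your overall strategy --- reduce everything to the invariance of $\sigma$, $\sigma_{W}$, $\sigma_a$, $E_a$ and $\Pi_a$ under a commuting nilpotent perturbation and then substitute --- is exactly the paper's strategy, and most of the spectral identities you invoke are the ones the paper cites. But the one step you yourself single out as ``the only non-formal point'', the preservation of $E_a$, is handled by an argument that is wrong. You claim that every isolated point of $\sigma_a(T)$ is automatically an eigenvalue, so that $E_a(T)=\mbox{iso}\,\sigma_a(T)$. This is false, and the paper's own example of the Volterra operator $V$ refutes it: $\sigma_a(V)=\{0\}$, so $0$ is isolated in $\sigma_a(V)$, yet $V$ is injective and $E_a(V)=\emptyset$. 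The specific error is the dichotomy ``$a(T-\mu I)=0$ implies $T-\mu I$ is bounded below'': ascent zero means only that $T-\mu I$ is injective, and an injective operator with non-closed range (such as $V$) is not bounded below. The preliminary claim that an isolated point of $\sigma_a$ is a semi-Browder point, hence of finite ascent, is likewise unjustified.

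The irony is that the fact you dismiss as unavailable is actually true and is what the paper uses: for a nilpotent $N$ commuting with $T$, the set of eigenvalues is preserved, $\sigma_p(T+N-\mu I\ \mbox{sing.})$ being nontrivial iff $\sigma_p(T-\mu I\ \mbox{sing.})$ is; more precisely $\mathcal{N}(T-\mu I)\neq\{0\}$ if and only if $\mathcal{N}(T+N-\mu I)\neq\{0\}$. Indeed, if $(T-\mu I)x=0$ with $x\neq 0$, choose the least $k\geq 1$ with $N^{k}x=0$ and set $y=N^{k-1}x\neq 0$; then $(T-\mu I)y=N^{k-1}(T-\mu I)x=0$ and $Ny=N^{k}x=0$, so $(T+N-\mu I)y=0$, and the converse follows on replacing $N$ by $-N$. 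Only multiplicities can change, which is irrelevant here since $E_a$ is defined through $\sigma_p$, not $\sigma_p^0$. Combining this with $\sigma_a(T+N)=\sigma_a(T)$ gives $E_a(T+N)=E_a(T)$, which is precisely the paper's route (via the cited proof of Theorem 3.5 of Berkani--Zariouh). With that repair, the rest of your substitution argument and the symmetry trick for the converse are fine.
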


\begin{proof}Since $N$ is nilpotent and commutes with $T,$ we know that  $\sigma(T+N)=\sigma(T).$  From the proof of \cite[Theorem 3.5]{BZ1},
we have  $0<n(T+N)\Longleftrightarrow 0<n(T)$ and so
  $E_a(T+N)=E_a(T).$
    From \cite[Corollary 3.8]{ZJZ} we
   know that  $\Pi_a(T+N)=\Pi_a(T).$   Furthermore,  $\sigma_{W}(T+N)=\sigma_{W}(T),$ see
  \cite[Lemma 2.2]{Ob}. This finishes the proof.
\end{proof}

\begin{rema}\rm   We   notice that the assumption of commutativity  in the  Proposition \ref{prop1}   is crucial.
\begin{enumerate}
\item  Let $T$ and $N$ be defined on $\ell^2(\mathbb{N})$ by
$$ T(x_1, x_2, \ldots)=(0, \frac {x_1}{2}, \frac{x_2}{3}, \ldots) \mbox{ and }
 N(x_1, x_2, \ldots)=(0, \frac{-x_1}{2}, 0, 0, \ldots).$$
 Clearly $N$ is nilpotent  and does not commute with $T.$ The
property $(Z_{E_{a}})$ is  satisfied by $T,$ since
$\sigma(T)=\{0\}=\sigma_{W}(T)$ and $E_a(T)=\emptyset.$ But $T+N$
does not satisfy  property $(Z_{E_{a}})$  as we have
$\sigma(T+N)=\sigma_{W}(T+N)=\{0\}$ and $\{0\}=  E_a(T+N).$
\item Let $T$ and $N$ be defined by $$T(x_1,
x_2, x_3, \ldots)=(0, x_1, x_2, x_3, \ldots) \mbox{ and }
N(x_1, x_2, \ldots)=(0,-x_1, 0, 0, \ldots).$$ $N$ is nilpotent and $TN\neq NT.$ Moreover,
$\sigma(T)=\sigma_{W}(T)=D(0, 1),$ and  $\Pi_a(T)=\emptyset.$ So $T$ satisfies property $(Z_{\Pi_{a}}).$
 But $T+N$ does not satisfy property $(Z_{\Pi_{a}}),$ since   $\sigma(T+N)=\sigma_{W}(T+N)=D(0, 1),$ and $\Pi_a(T+N)=\{0\}.$
\end{enumerate} \end{rema}

The stability of properties $(Z_{E_{a}})$ and $(Z_{\Pi_{a}}),$ showed in  Proposition \ref{prop1} cannot
 be extended   to commuting quasi-nilpotent operators, as we can see in the next Example.

 \begin{ex}\label{} \rm   Let $R$ be the   operator  defined on $\ell^2(\mathbb{N})$ by
$R(x_1,x_2, \ldots)=(0, \frac{x_1}{2},\frac{x_2}{3}, \ldots)$ and let  $T$ be the operator defined on
$\ell^2(\mathbb{N})$
 by $T=-R.$
 Clearly $R$ is compact and quasi-nilpotent  and verifies   $TR=RT=-R^2.$ Moreover, $T$ satisfies properties $(Z_{E_{a}})$ and $(Z_{\Pi_{a}}),$ because $\sigma(T)=\{0\}=\sigma_{W}(T)$
 and $E_a(T)=\emptyset.$ But $T+R=0$ does not satisfy neither  property $(Z_{E_{a}})$ nor property $(Z_{\Pi_{a}}),$ since
 $\sigma(T+R)=\{0\}=\sigma_{W}(T+R)$ and $E_a(T+R)=\{0\},$  $\Pi_a(T+R)=\{0\}.$ Here $\Pi^0(T+R)=\emptyset.$\end{ex}

 However, in the next theorems,
 we give necessary and sufficient conditions to ensure the stability of these properties  under commuting
 perturbations by Riesz operators which are not necessary nilpotent. The case of nilpotent operators is studied in
  Proposition \ref{prop1}.

  \begin{thm} \label{thm4}Let $R\in \rx$ and let $T\in \mathcal{B}(X)$ which commutes with $R.$
 If $T$ satisfies property $(Z_{E_{a}}),$ then the following statements are equivalent:\\
i)  $T+R$ satisfies property $(Z_{E_{a}});$\\
 ii) $E_a(T+R)=\Pi^0(T+R);$\\
 iii) $E_a(T+R)\cap \sigma(T)\subset \Pi^0(T).$
 \end{thm}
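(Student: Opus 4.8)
The plan is to establish the cycle (i) $\Rightarrow$ (ii) $\Rightarrow$ (iii) $\Rightarrow$ (i). The implication (i) $\Rightarrow$ (ii) is immediate: if $T+R$ satisfies property $(Z_{E_a})$, then applying Lemma \ref{lem0} to the operator $T+R$ yields $E_a(T+R)=\Pi^0(T+R)$. For (ii) $\Rightarrow$ (iii), I would argue that once $E_a(T+R)=\Pi^0(T+R)$ holds, one has in particular $E_a(T+R)\subset\Pi^0(T+R)\subset\Pi^0(T+R)$, and then intersect with $\sigma(T)$; the point is to pass from $\Pi^0(T+R)\cap\sigma(T)$ back to $\Pi^0(T)$. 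Since $R$ is Riesz and commutes with $T$, we have $\sigma_{b}(T+R)=\sigma_{b}(T)$ (commuting Riesz perturbations preserve the Browder spectrum), hence $\Pi^0(T+R)$ and $\Pi^0(T)$ differ only off each other's spectra, and restricting to $\sigma(T)$ identifies them: $\Pi^0(T+R)\cap\sigma(T)=\Pi^0(T)\cap\sigma(T+R)\subset\Pi^0(T)$. Combining gives (iii).

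The substance is in (iii) $\Rightarrow$ (i). First I would record the structural facts available under the standing hypothesis that $T$ satisfies $(Z_{E_a})$: by Lemma \ref{lem0}, $\sigma(T)=\sigma_{W}(T)\sqcup\Pi^0(T)$ with $E_a(T)=\Pi^0(T)=\Pi^0(T)$ all equal. Since $R$ is Riesz commuting with $T$, the invariance $\sigma_{W}(T+R)=\sigma_{W}(T)$ holds (this is the Weyl-spectrum analogue of the Browder-spectrum invariance; cf.\ the references used in Proposition \ref{prop1}), and likewise $\sigma_{b}(T+R)=\sigma_{b}(T)$. The goal is to show $\sigma(T+R)\setminus\sigma_{W}(T+R)=E_a(T+R)$. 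For the inclusion $\subseteq$: if $\lambda\in\sigma(T+R)\setminus\sigma_{W}(T+R)=\sigma(T+R)\setminus\sigma_{W}(T)$, then $T+R-\lambda I$ is Weyl but not invertible; since $\sigma_b(T+R)=\sigma_b(T)$ one gets that $\lambda$ is a finite-rank pole of $T+R$, so $\lambda\in\Pi^0(T+R)\subset E_a(T+R)$. For the reverse inclusion $\supseteq$, take $\lambda\in E_a(T+R)$. Either $\lambda\in\sigma(T)$ or not. If $\lambda\notin\sigma(T)$, then $\lambda\notin\sigma_b(T)=\sigma_b(T+R)$, so $\lambda$ is a finite-rank pole of $T+R$, hence $\lambda\in\Pi^0(T+R)\subseteq\sigma(T+R)\setminus\sigma_{W}(T+R)$. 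If $\lambda\in\sigma(T)$, then by hypothesis (iii) $\lambda\in\Pi^0(T)=\Pi^0(T+R)$ (using $\sigma_b(T)=\sigma_b(T+R)$ again to transfer the finite-rank pole), so once more $\lambda\in\sigma(T+R)\setminus\sigma_W(T+R)$. This yields property $(Z_{E_a})$ for $T+R$.

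The main obstacle I anticipate is the careful bookkeeping needed to move finite-rank poles between $T$ and $T+R$: the equality $\sigma_b(T+R)=\sigma_b(T)$ controls the \emph{location} of Browder points, but one must also check the \emph{finite-rank} condition on the spectral projection, i.e.\ that a point which is a pole of $T+R$ and lies outside $\sigma_W(T+R)$ is automatically of finite rank. This follows because a point outside the Weyl spectrum at which $T+R-\lambda I$ is not invertible is a pole of finite rank (the local structure of a Weyl operator which fails to be invertible at an isolated spectral point forces $\dim\NN(T+R-\lambda I)<\infty$), but it is the step that must be spelled out rather than waved through. A secondary subtlety is verifying $E_a(T+R)\subseteq\sigma(T)\cup(\text{poles off }\sigma(T))$ cleanly, which is exactly what the dichotomy above handles, so no separate argument is required once (iii) is in hand.
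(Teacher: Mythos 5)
Your proof is correct and uses essentially the same ingredients as the paper: Lemma \ref{lem0} applied to $T+R$ for (i)$\Rightarrow$(ii), the invariance $\sigma_{b}(T+R)=\sigma_{b}(T)$ under commuting Riesz perturbation together with the dichotomy $\mu\in\sigma(T)$ or $\mu\notin\sigma(T)$ for passing between (ii) and (iii), and the transfer of Browder's theorem from $T$ to $T+R$ (which you obtain by combining $\sigma_{W}$- and $\sigma_{b}$-invariance, where the paper simply cites a lemma) to close the loop. Arranging the argument as a cycle with a direct (iii)$\Rightarrow$(i) rather than as two separate equivalences is only a cosmetic difference, and your worry about the finite-rank condition is already absorbed by the definition $\sigma_{b}(S)=\sigma(S)\setminus\Pi^{0}(S)$.
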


\begin{proof} i) $\Longleftrightarrow$ ii) If $T+R$  satisfies $(Z_{E_{a}}),$ then from Lemma \ref{lem0}
we have $E_a(T+R)=\Pi^0(T+R).$ Conversely, assume that
$E_a(T+R)=\Pi^0(T+R).$ Since $T$ satisfies property  $(Z_{E_{a}})$ then
 it satisfies  Browder's theorem. From \cite[Lemma 3.5]{AZ},  $T+R$ satisfies Browder's theorem
  that's $\sigma(T+R)\setminus\sigma_{W}(T+R)=\Pi^0(T+R).$ So
  $T+R$ satisfies property $(Z_{E_{a}}).$\\
 ii) $\Longrightarrow$ iii) Assume that $\Pi^0(T+R)=E_a(T+R)$ and
let $\lambda_{0}\in E_a(T+R)\cap \sigma(T)$ be arbitrary. Then
$\lambda_{0}\in \Pi^0(T+R)\cap\sigma(T)$ and so
$\lambda_{0}\not\in\sigma_{b}(T+R).$ Since w know from \cite{Ra1} that $\sigma_{b}(T)=\sigma_{b}(T+R),$  then
$\lambda_{0}\in\Pi^0(T).$ This proves that $E_a(T+R)\cap
\sigma(T)\subset \Pi^0(T).$

iii) $\Longrightarrow$ ii) Suppose that $E_a(T+R)\cap\,
\sigma(T)\subset \Pi^0(T).$ As the inclusion $E_a(T+R)\supset \Pi^0(T+R)$ is always true, it suffices to show that $E_a(T+R)\subset
\Pi^0(T+R).$ Let $\mu_{0}\in E_a(T+R)$ be arbitrary. We distinguish
two cases: the first is $\mu_{0}\in\sigma(T).$ Then $\mu_{0}\in E_a(T+R)\cap\,\sigma(T)\subset \Pi^0(T).$ So
$\mu_{0}\not\in\sigma_{b}(T)=\sigma_{b}(T+R)$ and  then $\mu_{0}\in\Pi^0(T+R).$ The second case
is $\mu_{0}\not\in\sigma(T).$ This implies that
$\mu_{0}\not\in\sigma_{b}(T+R).$ Thus
 $\mu_{0}\in\Pi^0(T+R).$
As a conclusion,  $E_a(T+R)= \Pi^0(T+R).$
Remark that the statements ii) and iii) are  always equivalent without the assumption that $T$ satisfies
property  $(Z_{E_{a}}).$   \end{proof}





Similarly to Theorem \ref{thm4}, we have the following perturbation result for
 the property $(Z_{\Pi_{a}}).$

\begin{thm}\label{thm6} Let $R\in \rx.$
 If $T\in \mathcal{B}(X)$ satisfies property $(Z_{\Pi_{a}})$  and commutes with $R,$ then
the following statements are equivalent:\\
i) $T+R$ satisfies property $(Z_{\Pi_{a}});$ \\
ii) $\Pi^0(T+R)=\Pi_a(T+R);$\\
iii) $\Pi_a(T+R)\cap \sigma(T)\subset \Pi^0(T).$\end{thm}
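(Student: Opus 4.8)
The plan is to mirror closely the proof of Theorem~\ref{thm4}, replacing the set $E_a$ by $\Pi_a$ throughout and replacing the application of Lemma~\ref{lem0} by Lemma~\ref{lem2}. The three ingredients I expect to need are: (1) Lemma~\ref{lem2}, which gives $\Pi_a(S)=\Pi^0(S)$ for any operator $S$ satisfying property $(Z_{\Pi_{a}})$; (2) the fact that property $(Z_{\Pi_{a}})$ for $T$ implies Browder's theorem for $T$ (from Theorem~\ref{thm2}(v) or from the diagram), together with the perturbation result \cite[Lemma 3.5]{AZ} that Browder's theorem is stable under commuting Riesz perturbations, so that $T+R$ satisfies Browder's theorem $\sigma(T+R)\setminus\sigma_{W}(T+R)=\Pi^0(T+R)$; and (3) the invariance $\sigma_{b}(T)=\sigma_{b}(T+R)$ under commuting Riesz perturbations, from \cite{Ra1}, which is what allows us to pass between $\Pi^0(T)$ and $\Pi^0(T+R)$ on the part of the plane sitting in $\sigma(T)$.

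First I would prove (i) $\Longleftrightarrow$ (ii). The forward direction is immediate from Lemma~\ref{lem2} applied to $T+R$: if $T+R$ satisfies $(Z_{\Pi_{a}})$ then $\Pi_a(T+R)=\Pi^0(T+R)$. For the converse, assume $\Pi^0(T+R)=\Pi_a(T+R)$. Since $T$ satisfies $(Z_{\Pi_{a}})$, it satisfies Browder's theorem, hence by \cite[Lemma 3.5]{AZ} so does $T+R$, i.e. $\sigma(T+R)\setminus\sigma_{W}(T+R)=\Pi^0(T+R)=\Pi_a(T+R)$; this is exactly property $(Z_{\Pi_{a}})$ for $T+R$.

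Next I would show (ii) $\Longleftrightarrow$ (iii), and here I expect the argument to be literally the same bookkeeping as in Theorem~\ref{thm4}, with $\Pi^0$ playing the role that $E_a$ played there only through the hypothesis; the inclusion $\Pi^0(T+R)\subset\Pi_a(T+R)$ is always true, so in both directions one only has to control the reverse inclusion. For (ii) $\Longrightarrow$ (iii): take $\lambda_0\in\Pi_a(T+R)\cap\sigma(T)$; by (ii) it lies in $\Pi^0(T+R)$, hence $\lambda_0\notin\sigma_b(T+R)=\sigma_b(T)$, and since $\lambda_0\in\sigma(T)$ this forces $\lambda_0\in\Pi^0(T)$. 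For (iii) $\Longrightarrow$ (ii): take $\mu_0\in\Pi_a(T+R)$ and split on whether $\mu_0\in\sigma(T)$. If $\mu_0\in\sigma(T)$, then $\mu_0\in\Pi_a(T+R)\cap\sigma(T)\subset\Pi^0(T)$, so $\mu_0\notin\sigma_b(T)=\sigma_b(T+R)$, whence $\mu_0\in\Pi^0(T+R)$ (using that $\mu_0$ is in $\sigma(T+R)$ since $\mu_0\in\Pi_a(T+R)\subset\sigma_a(T+R)\subset\sigma(T+R)$). If $\mu_0\notin\sigma(T)$, then $\mu_0\notin\sigma_b(T)=\sigma_b(T+R)$ directly, so again $\mu_0\in\Pi^0(T+R)$. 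Either way $\Pi_a(T+R)\subset\Pi^0(T+R)$, giving (ii). As in Theorem~\ref{thm4} I would remark that (ii) $\Longleftrightarrow$ (iii) does not actually use the hypothesis that $T$ satisfies $(Z_{\Pi_{a}})$, only the Riesz invariance of the Browder spectrum.

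The only genuinely delicate point — and the one I would double-check rather than wave through — is that in the case $\mu_0\in\Pi_a(T+R)$ with $\mu_0\in\sigma(T)$ one really does get $\mu_0\in\Pi^0(T+R)$ and not merely $\mu_0\notin\sigma_b(T+R)$: one needs $\mu_0$ to be an isolated point of $\sigma(T+R)$ lying in its spectrum, which follows because $\mu_0\notin\sigma_b(T+R)$ means $T+R-\mu_0 I$ is Browder, hence $\mu_0$ is a pole of finite rank (a point of $\Pi^0(T+R)$) as soon as $\mu_0\in\sigma(T+R)$; and $\mu_0\in\sigma(T+R)$ holds because $\Pi_a(T+R)\subset\sigma_a(T+R)\subset\sigma(T+R)$. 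So there is no gap, but this is the place where the proof of Theorem~\ref{thm4} was careful and I would be equally careful here.
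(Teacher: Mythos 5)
Your proof is correct and follows essentially the same route as the paper: the equivalence (i) $\Longleftrightarrow$ (ii) via Lemma~\ref{lem2} together with the stability of Browder's theorem under commuting Riesz perturbations, and the equivalence (ii) $\Longleftrightarrow$ (iii) by the same case analysis as in Theorem~\ref{thm4} using $\sigma_b(T)=\sigma_b(T+R)$. The extra care you take in checking $\mu_0\in\sigma(T+R)$ via $\Pi_a(T+R)\subset\sigma_a(T+R)$ is a point the paper passes over silently, but it is handled correctly.
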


\begin{proof} i) $\Longleftrightarrow$ ii) If $T+R$ satisfies $(Z_{\Pi_{a}})$ then from Lemma \ref{lem2},
$\Pi_a(T+R)=\Pi^0(T+R).$ Conversely, suppose that
$\Pi_a(T+R)=\Pi^0(T+R).$ Since $T$ satisfies property  $(Z_{\Pi_{a}})$ then
 it satisfies  Browder's theorem. Hence $T+R$ satisfies Browder's theorem that's $\sigma(T+R)\setminus\sigma_{W}(T+R)=\Pi^0(T+R).$ So
  $T+R$ satisfies property $(Z_{\Pi_{a}}).$\\
ii) $\Longleftrightarrow$ iii) Goes similarly with the proof of the equivalence between the second and the third statements of  Theorem \ref{thm4}.
Notice also that this equivalence is always true without property  $(Z_{\Pi_{a}})$ for $T.$ \end{proof}

The following   example  proves  in general that, the properties $(Z_{E_{a}})$ and $(Z_{\Pi_{a}})$  are not preserved under
commuting   finite rank power perturbations.

\begin{ex} \label{ex2} \rm
For fixed $0<\varepsilon<1,$ let $F_{\varepsilon}$ be a finite rank operator defined on $\ell^2(\mathbb{N})$ by
$F_{\epsilon}(x_1, x_2, x_3, \ldots)=(-\varepsilon x_1, 0, 0, 0, \ldots).$  We consider  the operators $T$ and $F$ defined by $T = R\oplus I$ and
$F = 0 \oplus F_{\varepsilon}.$
 $F$ is a finite rank operator and $TF = FT.$ We have,
$$\sigma(T)=\sigma(R)\cup\sigma(I)=D(0, 1) \mbox{, }\sigma_a(T)=\sigma_a(R)\cup\sigma_a(I)=C(0, 1) \mbox{, }
\sigma_{W}(T)=D(0, 1),$$
$$\sigma(T+F)=\sigma(R)\cup\sigma(I+F_{\varepsilon})=D(0, 1) \mbox{, } \sigma_{W}(T+F)=D(0, 1)\mbox{ and } $$  $$\sigma_a(T+F)=
\sigma_a(R)\cup\sigma_a(I+F_{\varepsilon})=C(0, 1)\cup\{1-\varepsilon\}.$$ Moreover,
   $E_a(T)=\Pi_a(T)=\emptyset.$ So  $T$ satisfies properties $(Z_{E_{a}})$ and $(Z_{\Pi_{a}}).$
 But, since $E_a(T+F)=\Pi_a(T+F)=\{1-\varepsilon\},$ then  $T+F$ does not satisfy either property
 $(Z_{E_{a}})$ nor property $(Z_{\Pi_{a}}).$ Here $\Pi_a(T+F)\cap\sigma(T)=\{1-\varepsilon\},$ $\Pi^0(T)=\Pi^0(T+F)=\emptyset.$  \end{ex}

\goodbreak
{\small \noindent Hassan  Zariouh,\newline Centre R\'egional pour les M\'etiers
de l'\'Education\newline et de la Formation
 de la r\'egion de l'oriental (CRMEFO),\newline Bodir, Oujda, Maroc.\newline
 \noindent h.zariouh@yahoo.fr
\end{document}